\documentclass{amsproc}
\usepackage{hyperref}
\usepackage{import}
\usepackage{pdfpages}
\usepackage{transparent}
\usepackage{xcolor}
\usepackage{subcaption}
\usepackage{tikz}
\usepackage{mathtools}
\usepackage{tensor}

\providecommand{\keywords}[1]{\textbf{\textit{Keywords: }} #1}


\newcommand{%
    \def\svgwidth{1\columnwidth}
    \import{./figures/}{.pdf_tex}
}[2][1]{%
    \def\svgwidth{#1\columnwidth}
    \import{./figures/}{#2.pdf_tex}
}

\pdfsuppresswarningpagegroup=1

\usepackage{todonotes}
\usepackage{csquotes}
\usepackage{orcidlink}


\usepackage{bm}

\newtheorem{Theorem}{Theorem}[section]
\newtheorem{Proposition}[Theorem]{Proposition}

\newtheorem{Lemma}[Theorem]{Lemma}

\theoremstyle{definition}
\newtheorem{Definition}[Theorem]{Definition}

\theoremstyle{remark}
\newtheorem{Remark}[Theorem]{Remark}

\usepackage{setspace}
\onehalfspacing


\begin{document}

\title[Singular canards for critical sets with self-intersections]{A topological perspective on singular canards for critical sets with transverse intersections}

\author{Riccardo Bonetto\orcidlink{0000-0001-8075-6147}}
\address{University of Groningen — Bernoulli Institute for Mathematics, Computer Science and Artificial Intelligence; Nijenborgh 9, 9747AG, Groningen, The Netherlands}
\email{r.bonetto@rug.nl}

\author{Hildeberto Jardón-Kojakhmetov\orcidlink{0000-0001-8708-7409}}
\address{University of Groningen — Bernoulli Institute for Mathematics, Computer Science and Artificial Intelligence; Nijenborgh 9, 9747AG, Groningen, The Netherlands}
\email{h.jardon.kojakhmetov@rug.nl}

\subjclass[2020]{...}
\date{...}

\keywords{Singular canards, self-intersections}

\begin{abstract}
   This paper gives a new perspective on singular canards, which is topological in flavor. One key feature is that our construction does not rely on coordinates; consequently, the conditions for the existence of singular canards that we provide are purely geometric. The singularities we study originate at the self-intersection of curves of equilibria of the unperturbed system. Our contribution even allows us to consider degenerate cases of multiple pairwise transverse intersecting  branches of the critical set. We employ stratification theory and algebraic geometric properties to provide sufficient conditions leading to the presence of singular canards. By means of two examples, we corroborate our findings using the well-known blow-up technique.
\end{abstract}

\maketitle



\section{Introduction}

The set of zeros, or equilibria, of a vector field is one of the most fundamental objects to study in order to understand the behaviour of integral curves, or solutions. For this reason, one is also interested in the effect of small perturbations of the former vector field on the set of equilibria. Looking back in history, Fenichel contributed to the development of the theory for normally hyperbolic manifolds and their persistence \cite{Fen1972, FENICHEL197953, hirsch2006invariant}. A remarkable result from Fenichel's theory is that hyperbolic equilibria perturb to invariant objects with the same stability properties. We can therefore say that the behaviour near hyperbolic equilibria is well-understood. However, zeros are not always hyperbolic. So, a further aim of \emph{geometric singular perturbation theory} (GSPT) has been to understand the properties of non-hyperbolic equilibria. It is worth mentioning that, if a zero is semi-hyperbolic by a \emph{centre manifold reduction} \cite{carr1981applications, Vanderbauwhede1989} one can reduce the study to a \emph{fully non-hyperbolic} point. A particular class of non-hyperbolic equilibria on which we will focus is the nilpotent one\footnote{An equilibrium point of a vector field is called nilpotent if the linearisation of the vector field at such a point has only real zero eigenvalues.}.

The presence of a nilpotent singularity can induce the exceptional appearance of \emph{canard} solutions, i.e., solutions that unexpectedly follow a repelling set for a non-negligible amount of time (more detailed explanations will be given later in the text.) Canards have a long history as well \cite{Benoit1981, Eckhaus1983, Dumortier1996, SZMOLYAN2001419}, that reach modern times with the application of the \emph{blow-up} (see appendix \ref{app:blowup}). 

Our aim is to give a coordinate-free description, in line with coordinate-free GSPT \cite{wechselberger2020geometric}, of canard solutions crossing singularities arising at the intersection of two, or more, curves of equilibria. We adapt tools and results from algebra and topology to the problem under consideration. A coordinate-free and topological perspective could set the path to further generalisation of the theory to settings that appear to be challenging under current techniques including e.g., high-dimensions, and degenerate problems.

\section{Singular Canards}\label{sec:strat}

We consider a \emph{two-dimensional real polynomial vector field} 
\begin{equation}\label{eq:problem}
    X=X_0 + \epsilon X_1 ,
\end{equation}
where $\epsilon$ is a small parameter, $0<\epsilon \ll 1$. We omit higher order terms of the form $\epsilon^n X_n$, $n>1$, as they do not contribute to the following analysis.
The critical variety is the set defined by $\mathcal{C} \coloneqq \{ p \in \mathbb{R}^2 \ | \ X_0(p)=0 \}$. The vector field $X_0$ has two orthogonal components, let us call them $A$ and $B$. That is, in Cartesian coordinates one would have $X_0=A\frac{\partial}{\partial x}+B\frac{\partial}{\partial y}$. Let $V(A)$, $V(B)$ be the algebraic curves defined by setting to zero the polynomials $A$, $B$; notice that $\mathcal{C} = V(A) \cap V(B)$. Let us consider the factorisation into irreducible (real) polynomials $A= \prod_k A_k$, $B= \prod_l B_l$ such that $V(A)=\bigcup_k V(A_k)$, $V(B)=\bigcup_l V(B_l)$. Since we are interested in critical varieties arising from the above factorisation, we have the following definition.

\begin{Definition}
$A$ and $B$ have a \emph{common component} if $\exists\, k,\,l$ such that $A_k = B_l$ modulo constants.
\end{Definition}

In particular, we will deal with singular perturbations, which in the current setting appear when the critical variety has dimension $1$.

\begin{Definition}[\cite{wechselberger2020geometric}]
   The perturbation problem \eqref{eq:problem} is singular if $\dim(\mathcal{C})=1$.
\end{Definition}

\begin{Proposition}
The perturbation problem defined by $X=X_0 + \epsilon X_1$ is singular only if either of the following hold:
\begin{enumerate}
    \item $A$ and $B$ are nontrivial and have at least one common component, say $A_k$, $B_l$ such that $\dim\left(V(A_k)\right)=\dim\left(V(B_l)\right)=1$,     \item $A$ is trivial and $\dim\left(V(B)\right)=1$ (or vice versa).
\end{enumerate}
\end{Proposition}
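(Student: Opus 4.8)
The plan is to argue by analyzing the dimension of the intersection of algebraic curves. Recall that $\mathcal{C} = V(A) \cap V(B)$, and by the factorizations $A = \prod_k A_k$, $B = \prod_l B_l$ into irreducible real polynomials, we have $V(A) = \bigcup_k V(A_k)$ and $V(B) = \bigcup_l V(B_l)$, so that $\mathcal{C} = \bigcup_{k,l} \bigl(V(A_k) \cap V(B_l)\bigr)$. Since the problem is singular, $\dim(\mathcal{C}) = 1$, hence at least one of the pairwise intersections $V(A_k) \cap V(B_l)$ must have dimension $1$. I would fix such a pair and split into the two natural cases: either both $A$ and $B$ are nontrivial (nonconstant), or one of them is a constant polynomial.

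In the first case, where both $A$ and $B$ are nontrivial, the key step is the following fact from the theory of plane algebraic curves: if $A_k$ and $B_l$ are irreducible polynomials and $V(A_k) \neq V(B_l)$ (equivalently, $A_k \neq B_l$ modulo constants, since irreducible polynomials defining the same variety are associates), then $V(A_k) \cap V(B_l)$ is a proper algebraic subset of the irreducible curve $V(A_k)$, and therefore has dimension strictly less than $\dim V(A_k) \le 1$, i.e.\ dimension $0$ (a finite set of points) — or is empty. Hence if $\dim\bigl(V(A_k) \cap V(B_l)\bigr) = 1$, we must have $V(A_k) = V(B_l)$, so $A_k = B_l$ up to a constant, and necessarily $\dim V(A_k) = \dim V(B_l) = 1$ (a single point cannot support a curve of dimension $1$). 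This is exactly conclusion (1). The one subtlety here is the passage between the real and complex settings: over $\mathbb{R}$, irreducibility and the dimension of the real zero set behave less tamely than over $\mathbb{C}$ (an irreducible real polynomial can have a $0$-dimensional or empty real zero set), so I would phrase the intersection-dimension argument carefully, either by working with the Zariski closure / complexification and then noting that a $1$-dimensional real component forces the corresponding complex component to be $1$-dimensional, or by invoking a real-algebraic version of the statement that two distinct irreducible curves meet in a set of lower dimension.

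In the second case, suppose $A$ is trivial, i.e.\ a nonzero constant (if $A \equiv 0$ identically then $X_0 = B \frac{\partial}{\partial y}$ and $\mathcal{C} = V(B)$ directly; if $A$ is a nonzero constant then $V(A) = \emptyset$ and $\mathcal{C} = \emptyset$, which is not $1$-dimensional, so this sub-case is vacuous for a singular problem). Thus the only way $A$ can be "trivial" and still yield a singular problem is $A \equiv 0$, in which case $\mathcal{C} = \mathbb{R}^2 \cap V(B) = V(B)$, and $\dim(\mathcal{C}) = 1$ forces $\dim V(B) = 1$. Symmetrically with the roles of $A$ and $B$ swapped. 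This gives conclusion (2). I would also remark that the two cases are not mutually exclusive in a misleading way: if both $A$ and $B$ are nontrivial the proposition forces case (1), and if one is trivial it forces case (2), so the disjunction in the statement is exhaustive.

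I expect the main obstacle to be the real-algebraic geometry bookkeeping in case (1): making precise, without coordinates, why two distinct irreducible plane curves intersect in a $0$-dimensional set, while being careful that "irreducible over $\mathbb{R}$" is the right hypothesis and that $V(A_k) = V(B_l)$ as \emph{real} varieties genuinely implies $A_k = B_l$ modulo constants (this uses that an irreducible real polynomial is, up to scalar, determined by its real zero set provided that zero set is $1$-dimensional — a real Nullstellensatz-type statement). Everything else — the decomposition of $\mathcal{C}$ along the factorizations, the dimension count, and the trivial-component case — is routine.
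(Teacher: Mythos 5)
Your argument is correct and follows essentially the same route as the paper: both proofs hinge on the classical fact that two plane curves with no common component intersect in a finite (hence zero-dimensional) set, so $\dim(\mathcal C)=1$ forces a one-dimensional common component, with the trivial-component case handled separately as the standard form. Your version is simply more granular (working factor-by-factor) and more careful about the real-versus-complex subtleties, which the paper's one-line citation of Fulton glosses over.
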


\begin{proof}
If $A$ and $B$ are nontrivial and have no \emph{common components} then $V(A) \cap V(B)$ is a finite set of points \cite{fulton2008algebraic}. Then $A$ and $B$ must have a common component. Moreover, to satisfy $\dim(\mathcal{C})=1$ such a component must be one-dimensional, leading to the first condition. If $A$ is trivial and $\dim(V(B))=1$ then the system is in the \emph{standard form} \cite{kuehn2016multiple}.
\end{proof}
\begin{Remark}
A relevant case where $A$ and $B$ can have a common component is when there is a $\mathbb{Z}_2$ symmetry of the vector field $X_0$. Such a case, together with the higher-dimensional counterparts, arises naturally in the study of coupled cell networks \cite{Golubitsky2006}, i.e., networked dynamical systems inheriting the symmetry properties of the graph structure.
\end{Remark}

Let us consider the case of singular perturbations. We assume that $\mathcal{C}$ has a point, $p_s$, with \emph{intersection number} equal to one \cite{fulton2008algebraic, fulton2012intersection}. In two dimensions, this last condition prescribes the typical geometry of (nondegenerate) transcritical and pitchfork singularities (later in section \ref{sec:degeneracy} we consider degenerate cases where at $p_s$ several curves intersect). For the rest of the paper, we restrict ourselves to a neighbourhood of $p_s$ where there are no other intersection points.
Given the assumptions above, the polynomials $A$ and $B$ have two common components $F_1$, $F_2$ such that $V(F_1) \cap V(F_2) = p_s$ and $V(F_1) \cup V(F_2) =\mathcal{C}$. Roughly speaking, $V(F_1)$ and $V(F_2)$ are two algebraic curves intersecting transversely at the point $p_s$. We define the polynomial $F:= F_1 F_2$ such that $V(F)=\mathcal{C}$.
The set of singular points of $V(F)$ is denoted by $\Sigma V(F)$, and it is given by $\{ p_s\}$.

Let us now consider the perturbation term $X_1$, for which we assume that $\|X_1(p_s) \| =O(1)$, in order to avoid the emergence of new equilibria in a neighbourhood of $p_s$ for $0<\epsilon\ll1$. Moreover, we assume that $X_1$ is such that there are no equilibria of the \emph{reduced flow} in a sufficiently large neighbourhood of $p_s$, see appendix \ref{app:projection}. 
Depending on the structure of the perturbation, and on the stability properties of the critical variety, the system can behave differently around $p_s$. Generically, integral curves that at first are close to an attracting branch of $\mathcal{C}$, when crossing (a neighbourhood of) $p_s$ can either follow another attracting branch or be repelled away. However, there is the possibility that a solution follows a repelling branch of $\mathcal{C}$ for a non-negligible amount of time; such solutions are called \emph{canards}. A similar behaviour can be observed for solution transitioning from a repelling branch to an attracting one; those are usually called \emph{faux-canards}. We are not going to distinguish between the two possible situations. In the following, we state the standard definition of \emph{singular canard}.

\begin{Definition}[\cite{wechselberger2020geometric, SZMOLYAN2001419}]\label{def:sing_canard}
    A singular canard is a solution of the reduced problem passing through the singular point with (nonzero) finite speed.
\end{Definition}

However, there is a problem: in our setting, at $p_s$ the tangent space of the critical variety is not defined. In order to obtain a well-defined problem, we introduce some concepts from stratification theory, which we adapt to the context of the paper.

\begin{Definition}[\cite{cushman2015global}]
    A \emph{stratification} of $\mathcal{C}$ is a collection $\mathfrak{X}$ of smooth manifolds such that
    \begin{enumerate}
        \item $\mathfrak{X}$ is a local partition of $\mathcal{C}$.
        \item $\forall \mathcal{S} \in \mathfrak{X}$, the closure of $\mathcal{S}$ in $\mathcal{C}$ is the union of $\mathcal{S}$ and $\{ \mathcal{S}' \in  \mathfrak{X} \ | \ \dim(\mathcal{S}) =0 \}$.
    \end{enumerate}
\end{Definition}

\begin{Definition}[\cite{gibson1976topological}]\label{def:whitney}
A \emph{Whitney stratification} is a stratification $\Tilde{\mathfrak{X}}$ that satisfies the \emph{Whitney regularity condition}.
\end{Definition}

The Whitney regularity condition ensures that along a stratum $\Tilde{\mathcal{S}} \in \Tilde{\mathfrak{X}}$ the topological type of $\Tilde{\mathcal{S}}$ does  not change; in terms of the map $X_0$, the rank of $\text{D} X_0|_{\Tilde{\mathcal{S}}}$ is maximal along each stratum, where $\text{D} X_0$ denotes the Jacobian. 
We say that a stratification is \emph{minimal} if it is refined by any other, unless otherwise stated we always refer to minimal stratifications. For such a reason, the word minimal will be omitted.

We recall that any algebraic set in $\mathbb{R}^n$ admits a Whitney stratification \cite{Hironaka1973}. Considering the filtration $V(F)\supseteq \Sigma V(F)$, the Whitney stratification of the critical variety $\mathcal C$ is given by $\Tilde{\mathfrak{X}} =\{\Tilde{\mathcal{S}}_1, \Tilde{\mathcal{S}}_2, \Tilde{\mathcal{S}}_3, \Tilde{\mathcal{S}}_4, \Tilde{\mathcal{S}}_5 \}$, where, without loss of generality, we can assume that $\dim(\Tilde{\mathcal S}_i)=1$, for $i=1,2,3,4$ and $\dim(\Tilde{\mathcal{S}}_5)=0$, see figure \ref{fig:strat_map}. Clearly, the stratum $\Tilde{\mathcal{S}}_5$ is the point of intersection, $\{p_s\}=\Sigma V(F)$. 

\begin{Definition}[\cite{cushman2015global}]\label{def:stratified_vf}
    A \emph{stratified vector field} on a stratification is a map that assigns to each stratum a smooth vector field.
\end{Definition}

We define a map, $\rho$, acting on vector fields on the Whitney stratification, $\Tilde{\mathfrak{X}}$, such that for the one-dimensional strata it coincides with the projection $\pi$ (see appendix \ref{app:projection}) of a vector field along fast fibres, while for the zero-dimensional stratum, it returns the restriction of the vector field to the point. Let us notice that $\rho \circ X_1$ gives a stratified vector field on the one-dimensional (Whitney) strata.

Next, let us relax the Whitney condition on the stratification of $\mathcal{C}$, and let us consider stratifications $\mathfrak{X}=\{\mathcal{S}_1, \mathcal{S}_2, \mathcal{S}_3 \}$, where one of the strata, say $\mathcal{S}_3$, is now given by the union of two one-dimensional Whitney strata with the zero-dimensional stratum, e.g., $\mathcal{S}_3 = \Tilde{\mathcal{S}}_2 \cup \Tilde{\mathcal{S}}_4 \cup \Tilde{\mathcal{S}}_5$, see figure \ref{fig:strat_map}. Let us remark that, by the definition of stratification, we consider only the combinations that give rise to smooth strata (and are minimal).

\begin{figure}[htbp]
\centering
\begin{tikzpicture}[scale=.8] 
\node at (0,1){
 \includegraphics[scale=0.65]
 {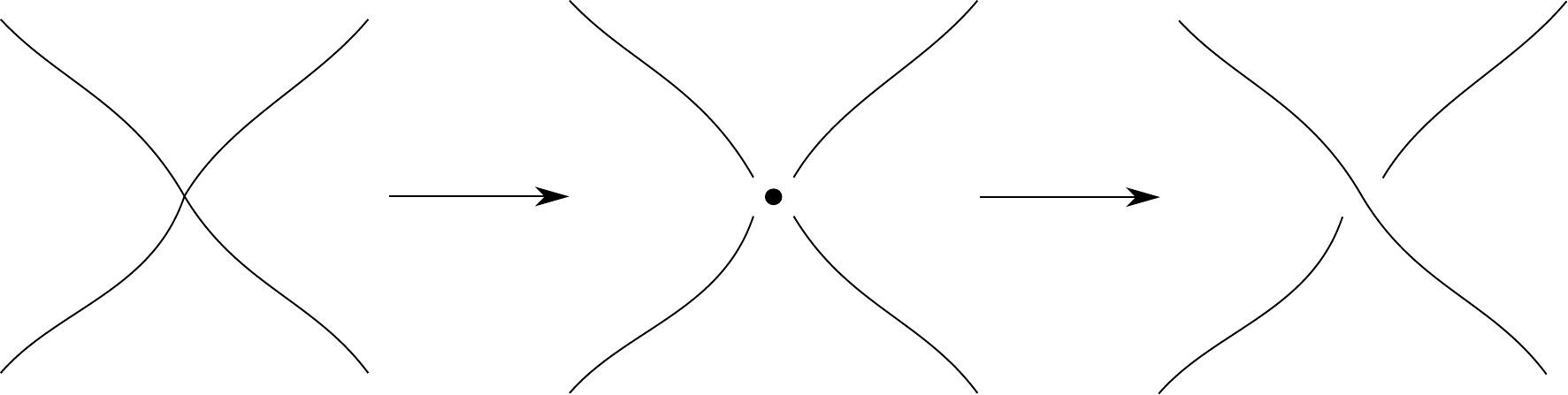}
 };
 \node at (-5.5,2.5){$\mathcal{C}$};
 \node at (-3,1.3){$\mathcal{C}\mapsto\tilde{\mathfrak X}$};
 \node at (-1.9,2.3){$\tilde{\mathcal{S}}_2$};
 \node at (-1.9,-.3){$\tilde{\mathcal{S}}_3$};
 \node at (.75,2.4){$\tilde{\mathcal{S}}_1$};
 \node at (.85,-.3){$\tilde{\mathcal{S}}_4$};
 \node at (.4,1){$\tilde{\mathcal{S}}_5$};
\node at (2.5,1.3){$\tilde{\mathfrak X}\mapsto\mathfrak X$};
\node at (6.25,2.3){${\mathcal{S}}_1$};
\node at (6.,0){${\mathcal{S}}_3$};
\node at (4.,0){${\mathcal{S}}_2$};
\end{tikzpicture}
    \caption[Stratifications of the critical variety]%
{Stratifications of the critical variety. On the left, the critical variety. In the centre, the Whitney stratification, $\Tilde{\mathfrak{X}}$, of $\mathcal{C}$. On the right, a possible stratification, $\mathfrak{X}$, of $C$ when the Whitney condition is relaxed.}
    \label{fig:strat_map}  
\end{figure}

\begin{Proposition}\label{prop:singular_canards}
A singular canard exists if the vector field $\rho \circ X_1$ is a stratified vector field on the relaxed stratification $\mathfrak{X}$.   
\end{Proposition}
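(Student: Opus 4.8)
The plan is to unwind the definitions and show that the hypothesis — that $\rho\circ X_1$ is a \emph{stratified} vector field on the relaxed stratification $\mathfrak{X}=\{\mathcal{S}_1,\mathcal{S}_2,\mathcal{S}_3\}$ — is exactly what is needed to produce a solution of the reduced problem that passes through $p_s$ with nonzero finite speed, which by Definition \ref{def:sing_canard} is a singular canard. First I would recall that on each one-dimensional Whitney stratum $\Tilde{\mathcal{S}}_i$ the reduced (slow) flow is governed by $\rho\circ X_1 = \pi\circ X_1$, the projection of $X_1$ along the fast fibres onto the tangent line of the stratum (appendix \ref{app:projection}); the standing assumptions on $X_1$ — namely $\|X_1(p_s)\|=O(1)$ and the absence of equilibria of the reduced flow near $p_s$ — guarantee that this reduced vector field is nonzero and bounded on a punctured neighbourhood of $p_s$ within each branch. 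Hence on each branch there is a well-defined reduced trajectory approaching (or leaving) $p_s$ with a definite, nonzero limiting speed; the only thing that can fail is that these one-sided trajectories on the two branches through $p_s$ do not match up into a single $C^1$ (indeed smooth) curve across $p_s$.

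The key step is then to observe that "matching up into a smooth curve through $p_s$" is precisely the statement that $\rho\circ X_1$ restricts to a \emph{smooth} vector field on a stratum $\mathcal{S}_3$ of the relaxed stratification that contains $p_s$ in its interior, e.g. $\mathcal{S}_3=\Tilde{\mathcal{S}}_2\cup\Tilde{\mathcal{S}}_4\cup\Tilde{\mathcal{S}}_5$. Indeed, by construction $\mathcal{S}_3$ is (when it is admissible) a one-dimensional smooth manifold containing $p_s$, on which the tangent space $T_{p_s}\mathcal{S}_3$ \emph{is} well-defined; this resolves the obstruction flagged in the text that $T_{p_s}\mathcal{C}$ does not exist. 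If $\rho\circ X_1$ is a stratified vector field on $\mathfrak{X}$, then by Definition \ref{def:stratified_vf} its restriction to $\mathcal{S}_3$ is a single smooth vector field $Y$ on $\mathcal{S}_3$, and by the compatibility of $\rho$ with $\pi$ on the one-dimensional pieces, $Y$ agrees with the reduced flow on $\Tilde{\mathcal{S}}_2$ and on $\Tilde{\mathcal{S}}_4$. The integral curve of $Y$ through $p_s$ is then a smooth solution of the reduced problem; it passes through $p_s$ with speed $\|Y(p_s)\|=\|\rho\circ X_1(p_s)\|$, which is finite (since $X_1$ is polynomial and $\pi$ is well-defined along $\mathcal{S}_3$ near $p_s$) and nonzero (by the no-equilibria assumption on the reduced flow). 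This is the required singular canard.

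The remaining step is bookkeeping: to check that the integral curve of $Y$ genuinely crosses $p_s$ rather than stagnating there, i.e. that $p_s$ is a regular point of the flow of $Y$ on $\mathcal{S}_3$. This follows again from $Y(p_s)=\rho\circ X_1(p_s)\neq 0$, so that by the flow-box theorem on the one-manifold $\mathcal{S}_3$ the trajectory is a regular parametrised curve near $p_s$ traversing it transversally in time — precisely a solution through the singular point with nonzero finite speed. I would close by noting that the choice of which relaxed stratum carries $p_s$ in its interior (here $\mathcal{S}_3$) corresponds to the choice of which pair of branches the canard connects, and that the hypothesis of the proposition is the assertion that \emph{some} such admissible smooth relaxed stratum exists on which $\rho\circ X_1$ is smooth.

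I expect the main obstacle to be making rigorous the claim that smoothness of $\rho\circ X_1$ \emph{as a stratified vector field on $\mathfrak{X}$} is equivalent to the $C^\infty$-matching of the two one-sided reduced flows across $p_s$ — in particular, verifying that the projection $\pi$ along fast fibres, which is defined stratum-by-stratum on the Whitney pieces, extends smoothly over $p_s$ when restricted to an admissible $\mathcal{S}_3$, and that no hidden incompatibility between the fast-fibre directions on the two branches obstructs this. This is where the transverse-intersection hypothesis (intersection number one) and the Whitney regularity of the finer stratification $\Tilde{\mathfrak{X}}$ do the real work, and I would spell out that dependence carefully rather than treating it as routine.
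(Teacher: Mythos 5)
Your proposal is correct and follows essentially the same route as the paper's own proof: both identify the hypothesis with the statement that $\rho\circ X_1$ restricts to a smooth vector field on the relaxed stratum $\mathcal{S}_3$ containing $p_s$, so that the reduced flow extends across the singular point and its integral curve is the singular canard of definition \ref{def:sing_canard}. Your version is in fact somewhat more explicit than the paper's (invoking the no-reduced-equilibria assumption for nonzero speed and the flow-box argument for genuine crossing), but the underlying argument is the same.
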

\begin{proof}
    Let us start with the Whitney stratification $\Tilde{\mathfrak{X}}$, and without loss of generality suppose we want to connect the two strata $\Tilde{\mathcal{S}}_2$, $\Tilde{\mathcal{S}}_4$. We recall that $\rho \circ X_1$ is already a stratified vector field on $\Tilde{\mathcal{S}}_2$, $\Tilde{\mathcal{S}}_4$. At the singular point, $p_s$, the map $\rho$ gives the vector $X_1(p_s)$. In order to connect with "nonzero velocity" the two strata $\Tilde{\mathcal{S}}_2$, $\Tilde{\mathcal{S}}_4$ it is necessary that the vector $X_1(p_s)$ is both tangent at the intersection of the closure of the two strata and compatible with the vector fields on the strata. In terms of stratifications, one considers the stratum $\mathcal{S}_3 := \Tilde{\mathcal{S}}_2 \cup \Tilde{\mathcal{S}}_4 \cup \Tilde{\mathcal{S}}_5$ belonging to a relaxed stratification $\mathfrak{X}$ and ask that $\rho \circ X_1$ induces a stratified vector field, in particular on $\mathcal{S}_3$. In this case, $\mathcal{S}_3$ is invariant under the (reduced) flow of $\rho\circ X_1$ making $\mathcal{S}_3$ a singular canard according to definition \ref{def:sing_canard}.
\end{proof}

\begin{Remark}
    To better understand the construction of proposition \ref{prop:singular_canards}, we show, in figure \ref{fig:connections}, a couple of cases where the singular canard connection is not present or not possible. The notation used in the proof of proposition \ref{prop:singular_canards} has been adapted to correspond to the one used in figures \ref{fig:strat_map} and \ref{fig:connections}.
\end{Remark}

\begin{figure}[ht]
\centering
  \begin{subfigure}[b]{0.45\textwidth}
         \centering
    \def\svgwidth{1\columnwidth}
\begingroup%
  \makeatletter%
  \providecommand\color[2][]{%
    \errmessage{(Inkscape) Color is used for the text in Inkscape, but the package 'color.sty' is not loaded}%
    \renewcommand\color[2][]{}%
  }%
  \providecommand\transparent[1]{%
    \errmessage{(Inkscape) Transparency is used (non-zero) for the text in Inkscape, but the package 'transparent.sty' is not loaded}%
    \renewcommand\transparent[1]{}%
  }%
  \providecommand\rotatebox[2]{#2}%
  \newcommand*\fsize{\dimexpr\f@size pt\relax}%
  \newcommand*\lineheight[1]{\fontsize{\fsize}{#1\fsize}\selectfont}%
  \ifx\svgwidth\undefined%
    \setlength{\unitlength}{141.73228346bp}%
    \ifx\svgscale\undefined%
      \relax%
    \else%
      \setlength{\unitlength}{\unitlength * \real{\svgscale}}%
    \fi%
  \else%
    \setlength{\unitlength}{\svgwidth}%
  \fi%
  \global\let\svgwidth\undefined%
  \global\let\svgscale\undefined%
  \makeatother%
  \begin{picture}(1,1)%
    \lineheight{1}%
    \setlength\tabcolsep{0pt}%
    \put(0,0){\includegraphics[width=\unitlength,page=1]{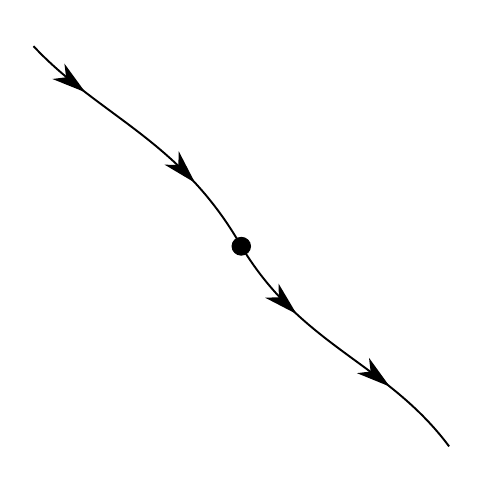}}%
    \put(0.26334734,0.78691439){\makebox(0,0)[lt]{\lineheight{1.25}\smash{\begin{tabular}[t]{l}$\Tilde{\mathcal{S}}_2$\end{tabular}}}}%
    \put(0.74590807,0.29405229){\makebox(0,0)[lt]{\lineheight{1.25}\smash{\begin{tabular}[t]{l}$\Tilde{\mathcal{S}}_4$\end{tabular}}}}%
    \put(0.54916143,0.48231484){\makebox(0,0)[lt]{\lineheight{1.25}\smash{\begin{tabular}[t]{l}$p_s$\end{tabular}}}}%
    \put(0,0){\includegraphics[width=\unitlength,page=2]{connection1.pdf}}%
  \end{picture}%
\endgroup%

         \caption{}
         \label{fig:connection1}
     \end{subfigure}
     \hfill
     \begin{subfigure}[b]{.45\textwidth}
         \centering
    \def\svgwidth{1\columnwidth}
\begingroup%
  \makeatletter%
  \providecommand\color[2][]{%
    \errmessage{(Inkscape) Color is used for the text in Inkscape, but the package 'color.sty' is not loaded}%
    \renewcommand\color[2][]{}%
  }%
  \providecommand\transparent[1]{%
    \errmessage{(Inkscape) Transparency is used (non-zero) for the text in Inkscape, but the package 'transparent.sty' is not loaded}%
    \renewcommand\transparent[1]{}%
  }%
  \providecommand\rotatebox[2]{#2}%
  \newcommand*\fsize{\dimexpr\f@size pt\relax}%
  \newcommand*\lineheight[1]{\fontsize{\fsize}{#1\fsize}\selectfont}%
  \ifx\svgwidth\undefined%
    \setlength{\unitlength}{141.73228346bp}%
    \ifx\svgscale\undefined%
      \relax%
    \else%
      \setlength{\unitlength}{\unitlength * \real{\svgscale}}%
    \fi%
  \else%
    \setlength{\unitlength}{\svgwidth}%
  \fi%
  \global\let\svgwidth\undefined%
  \global\let\svgscale\undefined%
  \makeatother%
  \begin{picture}(1,1)%
    \lineheight{1}%
    \setlength\tabcolsep{0pt}%
    \put(0,0){\includegraphics[width=\unitlength,page=1]{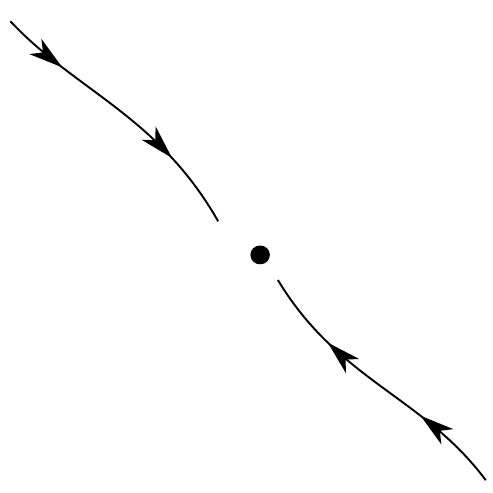}}%
    \put(0.31655722,0.78638572){\makebox(0,0)[lt]{\lineheight{1.25}\smash{\begin{tabular}[t]{l}$\Tilde{\mathcal{S}}_2$\end{tabular}}}}%
    \put(0.78420631,0.27648159){\makebox(0,0)[lt]{\lineheight{1.25}\smash{\begin{tabular}[t]{l}$\Tilde{\mathcal{S}}_4$\end{tabular}}}}%
    \put(0.5874596,0.46474421){\makebox(0,0)[lt]{\lineheight{1.25}\smash{\begin{tabular}[t]{l}$p_s$\end{tabular}}}}%
  \end{picture}%
\endgroup%

         \caption{}
         \label{fig:connection2}
     \end{subfigure}
        \caption{Figure \ref{fig:connection1} shows a case where the vector $X_1(p_s)$ fails to connect the two strata because it is not tangent to the curve. Indeed the vector field obtained via $\rho \circ X_1$ is not a vector field on the smooth curve considered. In figure \ref{fig:connection2} we can see a case where there exist no vector at $p_s$ inducing a smooth vector field, under the condition $\|X_1(p_s) \| =O(1)$.}
        \label{fig:connections}
\end{figure}

Essentially, proposition \ref{prop:singular_canards} states that a singular canard exists when the vector field $X_1$ at $p_s$ connects smoothly the stratified vector fields on the two strata we are connecting. Of course, such stratum corresponds to the solution of the reduced problem crossing the singular point with finite speed. By means of the stratification, the ambiguity arising at the intersection point is removed.

Let us recall that the critical variety $\mathcal{C}$ can be seen as the union of two smooth algebraic curves, $\mathcal{C}= V(F_1) \cup V(F_1)$, intersecting transversality at the singularity. Given the Whitney stratification $\Tilde{\mathfrak{X}} =\{\Tilde{\mathcal{S}}_1, \Tilde{\mathcal{S}}_2, \Tilde{\mathcal{S}}_3, \Tilde{\mathcal{S}}_4, \Tilde{\mathcal{S}}_5 \}$, there are two possible (relaxed) stratifications  $\mathfrak{X}$: $\{ V(F_1), \Tilde{\mathcal{S}}_2, \Tilde{\mathcal{S}}_4 \}, \{ V(F_1), \Tilde{\mathcal{S}}_1, \Tilde{\mathcal{S}}_3 \}$ (other combinations would not be smooth). So, from proposition \ref{prop:singular_canards} we deduce the following condition.

\begin{Lemma}\label{lm:condition}
If the vector field $X_1$ satisfies the condition $X_1(p_s) \wedge T_{p_s}V(F_1) = 0$, where $\wedge$ is the exterior product, then \eqref{eq:problem} admits singular canard solutions. Similarly for $V(F_2)$.
\end{Lemma}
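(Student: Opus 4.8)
The plan is to derive the lemma directly from Proposition \ref{prop:singular_canards} by specialising it to the two admissible relaxed stratifications and translating the requirement that $\rho\circ X_1$ be a stratified vector field into the stated tangency condition. First I would recall that, since $\mathcal{C}=V(F_1)\cup V(F_2)$ with both branches smooth and meeting transversally only at $p_s$, the Whitney strata can be labelled so that $V(F_1)=\Tilde{\mathcal S}_1\cup\Tilde{\mathcal S}_3\cup\Tilde{\mathcal S}_5$ and $V(F_2)=\Tilde{\mathcal S}_2\cup\Tilde{\mathcal S}_4\cup\Tilde{\mathcal S}_5$, so that the only two relaxed stratifications made of smooth strata are $\mathfrak X_1=\{V(F_1),\Tilde{\mathcal S}_2,\Tilde{\mathcal S}_4\}$ and $\mathfrak X_2=\{V(F_2),\Tilde{\mathcal S}_1,\Tilde{\mathcal S}_3\}$. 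By Proposition \ref{prop:singular_canards} it then suffices to prove that $X_1(p_s)\wedge T_{p_s}V(F_1)=0$ forces $\rho\circ X_1$ to be a stratified vector field on $\mathfrak X_1$, the statement for $V(F_2)$ being identical after interchanging the roles of $F_1$ and $F_2$.

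Second, I would examine the three strata of $\mathfrak X_1$ separately. On $\Tilde{\mathcal S}_2$ and $\Tilde{\mathcal S}_4$ there is nothing to check: these are one-dimensional Whitney strata, and, as already observed, $\rho\circ X_1=\pi\circ X_1$ is a smooth vector field there. Likewise, on $\Tilde{\mathcal S}_1$ and $\Tilde{\mathcal S}_3$ the map $\rho$ coincides with the fast-fibre projection $\pi$, so $\rho\circ X_1$ is smooth there; at $p_s$ it returns $X_1(p_s)$. Hence the obstruction to $\rho\circ X_1$ being a smooth vector field on $V(F_1)$ is concentrated at $p_s$, and a necessary condition is $X_1(p_s)\in T_{p_s}V(F_1)$, which is exactly $X_1(p_s)\wedge T_{p_s}V(F_1)=0$.

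Third — and this is the step I expect to be the main obstacle — I would show that this condition is also sufficient for smoothness at $p_s$, i.e. that the one-sided limits $\lim_{p\to p_s}\pi(X_1(p))$ taken along $\Tilde{\mathcal S}_1$ and along $\Tilde{\mathcal S}_3$ both equal $X_1(p_s)$. The key observation is that as $p\to p_s$ along $V(F_1)$ the fast-fibre direction $N_p$ (the eigenline of $\mathrm{D}X_0(p)$ associated with the nonzero eigenvalue) tends to the tangent line $T_{p_s}V(F_2)$ of the transverse branch, so the projector $\pi_p$ onto $T_pV(F_1)$ along $N_p$ converges to the projector onto $T_{p_s}V(F_1)$ along $T_{p_s}V(F_2)$; transversality $T_{p_s}V(F_1)\oplus T_{p_s}V(F_2)=\mathbb{R}^2$ makes this limiting projector well defined. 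Thus $\lim_{p\to p_s}\pi(X_1(p))$ equals the component of $X_1(p_s)$ along $T_{p_s}V(F_1)$, and under the hypothesis $X_1(p_s)\in T_{p_s}V(F_1)$ this is $X_1(p_s)$ itself from both sides; together with $\|X_1(p_s)\|=O(1)$ this produces a vector field on $V(F_1)$ that is continuous (and smooth away from $p_s$, the matching at $p_s$ being handled by the tangency hypothesis) and nonvanishing at $p_s$ with value tangent to $V(F_1)$.

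Finally I would close by invoking Proposition \ref{prop:singular_canards}: since $\rho\circ X_1$ is now a stratified vector field on $\mathfrak X_1$, the stratum $V(F_1)$ is invariant under the reduced flow of $\rho\circ X_1$ and is crossed at $p_s$ with finite nonzero speed $\|X_1(p_s)\|$, hence it is a singular canard in the sense of Definition \ref{def:sing_canard}. Running the same argument with $\mathfrak X_2$ yields the analogous conclusion for $V(F_2)$, which completes the proof.
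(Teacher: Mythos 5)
Your proposal is correct and takes essentially the same route as the paper: the paper gives no separate proof of Lemma \ref{lm:condition}, deducing it directly from Proposition \ref{prop:singular_canards} via the two admissible relaxed stratifications, exactly as you do, and your third step merely supplies a continuity argument that the paper leaves implicit. One small inaccuracy in that step: the fast-fibre direction $N_p$ along $V(F_1)$ does \emph{not} in general tend to $T_{p_s}V(F_2)$ --- writing $X_0=F_1F_2\,(a,b)$, one has $N_p=\Span\{(a(p),b(p))\}$, which tends to $\Span\{(a(p_s),b(p_s))\}$ (in the standard form, for instance, this is the constant horizontal direction, transverse to both branches); the well-definedness of the limiting projector therefore rests on the standing assumption that the fast foliation is nowhere tangent to the branches of $\mathcal{C}$, not on the transversality of $V(F_1)$ and $V(F_2)$ to each other. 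This does not affect your conclusion, since any projector onto $T_{p_s}V(F_1)$ fixes $X_1(p_s)$ once the tangency hypothesis $X_1(p_s)\wedge T_{p_s}V(F_1)=0$ holds.
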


\begin{Remark}
   In the standard form, conditions for singular canards in the case of transcritical and pitchfork singularities have been exploited in \cite{Krupa_2001}. A non-standard approach has been used in \cite{Maesschalck2015} to prove existence of canards for the unfolding of the vector field considered in \cite{Krupa_2001}. Our condition is in agreement with the results in the aforementioned manuscripts.
\end{Remark}

The analysis conducted in \cite{Krupa_2001, Maesschalck2015} not only provides conditions for singular canards but also proves their persistence under (appropriate) small perturbations. The persistence of canard solutions is a relevant result within the topic while relying on a detailed description of the system and rather quantitative techniques. In fact, proving the existence of canards for the unfolded problem usually relies on implicit function theorem arguments, while more quantitative estimates require, for example, an adaptation of Melnikov's method \cite{GuckenheimerHolmes}. Our approach is not quantitative in nature and we consider the problem from a wider perspective. For such reasons, there is no straightforward generalisation to the conditions of persistence (which we postpone to future works). Nevertheless, our method easily extends to degenerate singularities with pairwise transversal intersections, which we discuss in the following.

\subsection{Degeneracy}\label{sec:degeneracy}

In this section, we address some degenerate cases. Let us start by pointing out a desingularisation procedure. Let $F$ be the polynomial associated with the critical variety. In the previous section, we assumed a factorisation into irreducible components of the form $F=F_iF_j$. Since we assumed that the system has a singularity with an intersection number equal to one, then the corresponding critical variety is hyperbolic away from the singularity. However, if a term of the factorisation is of the form $(F_i)^n$, $n>1$, although the curve described is geometrically the same as the one given by $F_i$, the linearisation is not of full rank along the curve.

\begin{Lemma}
    Let $(F_i)^n$ be a term of the factorisation of $F$, with $n=2k+1$, $k\in \mathbb{N}$. The rescaling $F \mapsto F/(F_i)^{2k}$ desingularises the system.
\end{Lemma}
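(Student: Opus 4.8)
The plan is to realise the rescaling as a time reparametrisation that is regular on $\mathbb{R}^2\setminus V(F_i)$ and that drops the order of vanishing of $X_0$ along $V(F_i)$ from $n$ to $1$; maximal rank of the linearisation along $V(F_i)$ away from $p_s$ is then immediate. First I would record the algebraic structure. As in Section~\ref{sec:strat}, the common factor $F$ divides both orthogonal components $A,B$ of $X_0$, so if $(F_i)^{n}$ is the given term of the factorisation of $F$ (and, normalising, $n$ is maximal with $(F_i)^n\mid A$ and $(F_i)^n\mid B$), we may write $X_0=(F_i)^{n}\,H$ with $H=a\,\tfrac{\partial}{\partial x}+b\,\tfrac{\partial}{\partial y}$, where $a=A/(F_i)^{n}$ and $b=B/(F_i)^{n}$ are polynomials, $F_i$ is irreducible, and $F_i$ does not divide both $a$ and $b$. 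The rescaled object is $\tilde X_0:=X_0/(F_i)^{2k}=(F_i)^{\,n-2k}H=F_i\,H$, still a polynomial vector field precisely because $n=2k+1$.

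Next I would check that the rescaling does not disturb the slow--fast structure away from $V(F_i)$: on $\mathbb{R}^2\setminus V(F_i)$ the field $\tilde X_0$ equals $X_0$ divided by the \emph{strictly positive} scalar $(F_i)^{2k}$, hence has the same oriented orbits, the same fast fibration, and the same reduced flow there. This is exactly where oddness of $n$ is used: an even power of $F_i$ can be cancelled without reversing the flow on either side of $V(F_i)$, whereas for even $n$ one would be forced to cancel all of $(F_i)^n$, thereby deleting $V(F_i)$ from the critical variety. The critical variety itself is geometrically unchanged, since $V((F_i)^nG)=V(F_iG)$ for the cofactor $G=F/(F_i)^n$.

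It then remains to compute the linearisation of $\tilde X_0$ along $V(F_i)$. Differentiating $F_iH$ componentwise, for $q\in V(F_i)$,
\[
 \mathrm{D}\tilde X_0(q)=H(q)\,\nabla F_i(q)^{\!\top}+F_i(q)\,\mathrm{D}H(q)=H(q)\,\nabla F_i(q)^{\!\top},
\]
a matrix of rank at most one. Near $p_s$ the critical variety is the smooth curve $V(F_i)$ (intersection number one), so $\nabla F_i(q)\neq 0$; and, after possibly shrinking the neighbourhood of $p_s$, $H(q)\neq 0$ on $V(F_i)\setminus\{p_s\}$, because $F_i$ does not divide both $a,b$ and $F_i$ is irreducible with a genuine one-dimensional zero set. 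Hence $\mathrm{D}\tilde X_0(q)$ has rank exactly $1$, which is maximal for a one-dimensional critical manifold in the plane, so $V(F_i)\setminus\{p_s\}$ is normally hyperbolic for $\tilde X_0$, i.e. we are back in the non-degenerate setting of Section~\ref{sec:strat}. By contrast, for the untransformed field $\mathrm{D}X_0(q)=n\,F_i(q)^{n-1}H(q)\,\nabla F_i(q)^{\!\top}+F_i(q)^{n}\,\mathrm{D}H(q)=0$ on $V(F_i)$ since $n-1\geq 2$, confirming that the original linearisation is everywhere of rank $0$ along the curve and that the rescaling genuinely desingularises the system.

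The main obstacle is the step asserting $H(q)\neq 0$ on $V(F_i)$ near $p_s$. It rests on a real-Nullstellensatz-type fact — an irreducible polynomial with a one-dimensional real zero set cannot vanish on that curve without dividing the vanishing function, which is where the one-dimensionality hypothesis (and the usual caveat that the real curves under consideration behave like their complexifications) enters — together with ruling out the finitely many possible isolated common zeros of $a$, $b$ and of $G$ on $V(F_i)$; these are pushed outside the working neighbourhood of $p_s$ by the standing assumptions of Section~\ref{sec:strat}. The remaining ingredients, namely the positivity/orbit-equivalence argument and the rank computation, are routine.
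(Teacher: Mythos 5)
Your algebraic reductions are correct as far as they go: writing $X_0=(F_i)^nH$, observing that $\tilde X_0:=X_0/(F_i)^{2k}=F_iH$ is again polynomial precisely because $n-2k=1$, the computation $\mathrm D\tilde X_0(q)=H(q)\,\nabla F_i(q)^{\top}$ versus $\mathrm DX_0(q)=0$ for $q\in V(F_i)$, and the remark that $(F_i)^{2k}>0$ off $V(F_i)$, so the layer flows of $X_0$ and $\tilde X_0$ are orbit-equivalent there (this is indeed where oddness of $n$ enters). But all of this compares only the two \emph{unperturbed} fields. The lemma's claim that the rescaling ``desingularises the system'' is a claim about the full slow--fast system: the paper's proof opens by stating that the goal is topological equivalence of the system and the regularised system near a regular section of $V(F_i)$, i.e.\ a statement about $X_0+\epsilon X_1$ versus $\tilde X_0+\epsilon X_1$ for $0<\epsilon\ll1$. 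Dividing $X_0$ by $(F_i)^{2k}$ is \emph{not} a time reparametrisation of $X_0+\epsilon X_1$ (that would also divide $\epsilon X_1$ by a function vanishing on $V(F_i)$), so the two perturbed systems differ essentially in a neighbourhood of $V(F_i)$ of width $O(\epsilon^{1/(2k+1)})$ --- much wider than the $O(\epsilon)$ layer of the regularised problem --- and it is exactly there that normal hyperbolicity, and hence Fenichel theory, fails for the original field. Your argument never enters this region with $\epsilon>0$.

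That missing step is the entire content of the paper's proof: it reduces to the local model \eqref{eq:simple_system}, $\dot x=x^{2k+1}+\epsilon$, $\dot y=\alpha\epsilon$, blows up $(x,\epsilon)=(0,0)$ to a circle, and shows that the desingularised flow \eqref{eq:motion_circle} on $\psi\in[0,\pi]$ has, for every $k$, exactly the equilibria $0$ and $\pi$ (the fast fibres, hyperbolic and stable) plus a unique hyperbolic source $\psi^*$ --- the same portrait as the hyperbolic case $k=0$ --- from which topological equivalence follows. Without this, or some substitute such as a uniform-in-$\epsilon$ invariant-manifold argument in the transition zone, the lemma is not proved. Two smaller remarks: maximal rank of $H(q)\,\nabla F_i(q)^{\top}$ is not by itself normal hyperbolicity; one also needs the nonzero eigenvalue $\langle\nabla F_i(q),H(q)\rangle$ to be nonzero, i.e.\ transversality of the fast fibration to $V(F_i)$, which the paper assumes by excluding fold-type tangencies. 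Your heuristic for why even powers cannot be treated this way coincides with the paper's, so that part stands.
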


\begin{proof}
We want to show that near a section of the critical variety induced by $(F_i)^n$, with $n=2k+1$, away from the singularity, the system is topologically equivalent to the regularised system. Considering a small enough section, we can study the system given by
\begin{equation}\label{eq:simple_system}
    \begin{aligned}
    \dot x &= x^{2 k +1} + \epsilon \\
    \dot y &=\alpha \epsilon
    \end{aligned} ,
\end{equation}
where $\alpha$ is a real parameter. Equation \eqref{eq:simple_system} is obtained by considering the fact that the perturbation in a small enough neighbourhood of the critical variety can be considered constant. The section of the critical variety has been stretched to a straight line, and by a rotation the system is orthogonal to the fast-foliation. Let us point out that for completeness one should also study the cases where the equation for $\dot x$ is given by $x^{2 k +1} - \epsilon$ and $x^{2 k +1}$ respectively. However, there is no difference in the analysis we are going to show, for such reason these cases are omitted in the proof.

We perform a blow-up of system \eqref{eq:simple_system} along the $x$ and $\epsilon$ directions. The blow-up technique is briefly introduced in appendix \ref{app:blowup}. So, $\forall y$ the singular point $(x,\epsilon)=(0,0)$ become a circle. After desingularisation the motion on the circle is given by 
\begin{equation}\label{eq:motion_circle}
    \dot \psi = \frac{(2 k+1) \sin{\psi}  \left(\cos ^{2 k+1}(\psi )+\sin{\psi}\right)}{k \cos (2 \psi )-k-1} ,
\end{equation}
where $\psi \in [0, \pi ]$, i.e., $\psi$ is restricted to the values corresponding to positive $\epsilon$. The equilibria corresponding to the fast-foliation are given by $\psi=0$ and $\psi=\pi$, these equilibria are obtained by setting to zero the factor $\sin{\psi }$. We are left to study the equation $\cos ^{2 k}(\psi )+\tan{\psi}=0$. Let us notice that the function $\cos ^{2 k}(\psi )$ is positive semi-definite, and $\tan{\psi }$ is a monotone function in $] -\pi/2 , \pi/2[$, with image $\mathbb{R}$. Therefore, there is a unique solution in the interval $] -\pi/2 , \pi/2[$. By noticing that the equation $\cos ^{2 k}(\psi )+\tan{\psi}=0$ is periodic, with period $\pi$, we can conclude that there is a unique equilibrium for each interval with length $\pi$. So, we have that there is a unique equilibrium, $\psi^*$, in $[0, \pi ]$, $\forall k$.

By computing the derivative of \eqref{eq:motion_circle} we can study the stability properties. The equilibria $0,\pi$ are both stable and hyperbolic $\forall k$. As there is a unique equilibrium on the surface of the blown-up cylinder, we can further deduce that the equilibrium $\psi^*$ is a hyperbolic source. So, we can see that system \eqref{eq:simple_system} is topologically equivalent to the desingularised version, see figure \ref{fig:regularisation}. The case where the equilibria are sinks is analogous.

\begin{figure}[h]
\centering
    \resizebox{.3\linewidth}{!}{ 
    \def\svgwidth{1\columnwidth}
    \import{./figures/}{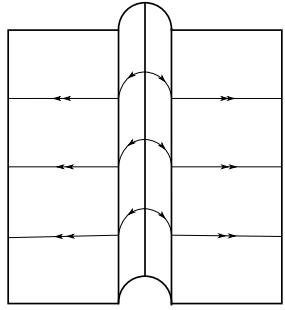}
 
       }
       \caption{A section of $V(F_i^{2k+1})$ after the blow-up. We show the fast-foliation together with the motion on the cylindrical blown-up space.}
       \label{fig:regularisation}
\end{figure}
\end{proof}

Briefly speaking, terms of the form $(F_i)^n$ with odd powers can be easily desingularised and then our results apply. In contrast, even powers lead to a more problematic situation, that is not desingularisable as in the odd case. Heuristically, we could think of the equation $x^2 + a$, where $a$ is a real constant. Depending on the value of $a$ we can have $0$, $1$, or $2$ solutions, where the unique solution is obtained only for $a=0$. So, we deduce that, in general, sections of the critical variety corresponding to terms of the form $(F_i)^{2k}$ do not persist, but either they disappear, or double. So, even powers require a dedicated analysis that we do not perform here.

Now, let us consider the case where the polynomial $F$ admits the factorisation $F_1 \dots F_N$, with $N\geq2$, such that $V(F_1)  \cup \dots \cup V(F_N) = \mathcal{C}$, $V(F_1)\cap \dots \cap V(F_N) = p_s$.

\begin{Definition}\label{def:transversal}
    Given $N$ algebraic curves intersecting at $p_s$, we say that the intersection is pairwise transversal if each pair of curves, $V(F_i), V(F_j)$ has intersection number one at $p_s$, $\forall i,j \in \{1, \dots, N \}$.
\end{Definition}

Pairwise transversality ensures that there are no curves of equilibria having the same tangent at $p_s$. So, under the setting of definition \ref{def:transversal}, we can repeat the procedure done for the case of two curves. The potential singular canards now are $N$, namely $V(F_1), \dots, V(F_N)$. Condition \ref{lm:condition} will read as follows.

\begin{Proposition}\label{prop:mult}
If the vector field $X_1$ satisfies the condition $X_1(p_s) \wedge T_{p_s}V(F_i) = 0$, then the system admits a singular canard along $V(F_i)$, where $i=1, \dots, N$.
\end{Proposition}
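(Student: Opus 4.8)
The plan is to reduce Proposition \ref{prop:mult} to the two-curve case treated in Lemma \ref{lm:condition} and Proposition \ref{prop:singular_canards} by isolating a single branch $V(F_i)$ and the singular point $p_s$. First I would recall that, under pairwise transversality (Definition \ref{def:transversal}), each pair $V(F_j), V(F_k)$ meets with intersection number one at $p_s$, so in a small enough neighbourhood $U$ of $p_s$ the critical variety $\mathcal{C}$ is the union of $N$ smooth branches meeting only at $p_s$, with pairwise distinct tangent lines $T_{p_s}V(F_j)$. Taking the filtration $\mathcal{C} \supseteq \Sigma V(F) = \{p_s\}$, the (minimal) Whitney stratification of $\mathcal{C}$ in $U$ consists of the $2N$ punctured half-branches (one-dimensional strata) together with the point stratum $\{p_s\}$; this is the direct analogue of $\tilde{\mathfrak{X}}$ for two curves, and the argument that any algebraic set admits such a stratification applies verbatim.

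Next I would construct the relaxed stratification adapted to the branch $V(F_i)$. As in the two-curve discussion preceding Lemma \ref{lm:condition}, I would glue the two half-branches of $V(F_i)$ together with $\{p_s\}$ into a single one-dimensional stratum $\mathcal{S} = V(F_i) \cap U$, and keep the remaining $2(N-1)$ half-branches as separate strata. By pairwise transversality $V(F_i)$ is smooth at $p_s$, so $\mathcal{S}$ is a smooth manifold and this is a legitimate (minimal) stratification $\mathfrak{X}$ in the sense of the relevant definition; no other branch can be merged with $\mathcal{S}$ without destroying smoothness, exactly because the tangents are pairwise distinct. Now I would invoke Proposition \ref{prop:singular_canards}: a singular canard exists precisely when $\rho \circ X_1$ is a stratified vector field on $\mathfrak{X}$. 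On the one-dimensional half-branches $\rho \circ X_1 = \pi \circ X_1$ is automatically a smooth vector field (this is just the reduced flow, which by assumption has no equilibria near $p_s$); on the point stratum $\rho$ returns $X_1(p_s)$. Hence the only obstruction to $\rho \circ X_1$ being stratified on $\mathfrak{X}$ is that the vector $X_1(p_s)$ must be tangent to $\mathcal{S}$ at $p_s$, i.e. tangent to $V(F_i)$, which is exactly the condition $X_1(p_s)\wedge T_{p_s}V(F_i) = 0$. When this holds, $\mathcal{S} = V(F_i)$ is invariant under the reduced flow of $\rho\circ X_1$ and therefore a solution of the reduced problem through $p_s$ with nonzero finite speed, i.e. a singular canard along $V(F_i)$ in the sense of Definition \ref{def:sing_canard}. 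Running this for each $i = 1,\dots,N$ gives the statement.

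The main obstacle, and the point that needs care rather than the formal reduction, is verifying that the merged set $\mathcal{S} = V(F_i)\cap U$ really is a smooth one-dimensional stratum through $p_s$ and that the complementary strata together with $\mathcal{S}$ still form an admissible stratification — in particular that relaxing Whitney regularity only at $p_s$ does not force additional mergers. This is where pairwise transversality is essential: it guarantees $V(F_i)$ is locally a smooth curve (intersection number one of $F_i$ with any $F_j$ forces $F_i$ itself to be nonsingular at $p_s$, after the odd-power desingularisation of the earlier Lemma if needed) and that $T_{p_s}V(F_i)\neq T_{p_s}V(F_j)$ for $j\neq i$, so the one-sided limits of tangent lines along the other branches never agree with that of $\mathcal{S}$, and no smooth enlargement of $\mathcal{S}$ is possible. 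A secondary point is the compatibility clause in the proof of Proposition \ref{prop:singular_canards}: one should note that on $V(F_i)$ the reduced vector field extends continuously (indeed smoothly) across $p_s$ with the value $X_1(p_s)$ once tangency holds, so "tangent" already implies "compatible with the vector fields on the strata" — this is immediate here because $V(F_i)$ is a single smooth curve and $\pi$ acts continuously along it. With these observations the proof is a direct transcription of the $N=2$ argument.
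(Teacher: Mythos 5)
Your proposal is correct and follows essentially the same route as the paper, which states Proposition \ref{prop:mult} as a direct repetition of the two-curve procedure: pairwise transversality gives distinct tangents at $p_s$, so each $V(F_i)$ can be merged into a smooth relaxed stratum and Proposition \ref{prop:singular_canards} applies. Your write-up simply makes explicit the details (the $2N$-half-branch Whitney stratification, smoothness of $V(F_i)$ at $p_s$ from intersection number one, and the tangency condition being the only obstruction) that the paper leaves implicit.
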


\subsection{Examples}\label{sec:examples}

As we already mentioned in the previous sections, the generic cases of transcritical and pitchfork singularities have been widely studied. As a consequence, a check of agreement between our results and the known conditions for singular canards in the non-degenerate setting is straightforward. Hereby we present two examples of degenerate singularities: a degenerate transcritical and a degenerate pitchfork, both satisfying the conditions of definition \ref{def:transversal}.
We are going to verify that, given the conditions of proposition \ref{prop:mult}, we can actually connect an attracting branch of the critical variety with a repelling one. To verify our claims, we employ a known technique: the blow-up, see appendix \ref{app:blowup}.

Let us consider the unperturbed vector field
\begin{equation}\label{eq:x0_transcritical}
    X_0 = \left( (y-x)(y+x)(y-x/2)(y+x/2) , 0 \right)
\end{equation}
The point $p_s=(0,0)$ is at the intersection of four lines of equilibria. Suppose we are interested in the canard following the branch $y=x/2$, which is attracting for $x$ negative and repelling for $x$ positive. Then, following the prescription of proposition \ref{prop:mult} we consider the perturbation term
\begin{equation}\label{eq:x1_transcritical}
    X_1 = \left( 1, 1/2 \right) .
\end{equation}
So, the vector field we study is given by $X=X_0 + \epsilon X_1$, see figure \ref{fig:transcritical}. By using a spherical blow-up on the singular point $(0,0)$, after desingularisation it is possible to identify the desired connection between the attracting and the repelling branch of the critical variety. Such a connection appears as a geodesic solution on the blow-up sphere, see figure \ref{fig:mult_transcritical}.

\begin{Remark}
    In this example, the existence of the singular canard could be also proven by noticing that the curve $y=x/2$ is invariant under the flow of $X$. Considering higher order terms, e.g., $\epsilon^2 X_2 + \epsilon^3 X_3 + \dots$ the line $y=x/2$ would not be invariant anymore. However, our claims and the blow-up picture still apply.
\end{Remark}

For the second example, we consider a degenerate pitchfork singularity given by the unperturbed vector field
\begin{equation}\label{eq:x0_pitchfork}
    X_0 = \left( (x +  y/2) (x -  y/2) (y - x^2) , 0 \right).
\end{equation}
The critical variety is given by a parabola and two straight lines intersecting at the origin. This case is interesting because, unlike the nondegenerate pitchfork, the parabola changes stability. So, we look for the canard following the parabola transitioning from the attracting to the repelling section. Since the tangent to the parabola at the origin is parallel to the $x$ axis, a simple choice for the perturbation satisfying proposition \ref{prop:mult} is given by
\begin{equation}\label{eq:x1_pitchfork}
    X_1 = - \left( 1, x \right) ,
\end{equation}
where the minus sign gives the correct direction, and the $y$ component provides a compatible flow on the normally hyperbolic sections without introducing equilibria of the reduced flow.
Again, by studying the vector field $X=X_0+\epsilon X_1$, see figure \ref{fig:pitchfork}, via a spherical blow-up we can confirm the presence of a canard connection between the attracting and repelling branches of the critical parabola, see figure \ref{fig:mult_pitchfork}.

For more details on the blow-up computations of both examples we refer to section \ref{sec:blowup_examples} of the appendix.

\begin{figure}
     \begin{center}
         \begin{subfigure}[b]{0.4\textwidth}
         \centering
    \def\svgwidth{1\columnwidth}
    \import{./figures/}{transcritical.pdf_tex}

         \caption{Degenerate transcritical}
         \label{fig:transcritical}
     \end{subfigure}
     \hfill
      \begin{subfigure}[b]{0.4\textwidth}
         \centering
    \def\svgwidth{1\columnwidth}
    \import{./figures/}{pitchfork.pdf_tex}

         \caption{Degenerate pitchfork}
         \label{fig:pitchfork}
     \end{subfigure}
     \end{center}
     \vfill
     \begin{center}
         \begin{subfigure}[b]{0.45\textwidth}
         \centering
    \def\svgwidth{1\columnwidth}
    \import{./figures/}{mult_transcritical.pdf_tex}

         \caption{Degenerate transcritical - Blow-up}
         \label{fig:mult_transcritical}
     \end{subfigure}
     \hfill
     \begin{subfigure}[b]{0.45\textwidth}
         \centering
    \def\svgwidth{1\columnwidth}
\begingroup%
  \makeatletter%
  \providecommand\color[2][]{%
    \errmessage{(Inkscape) Color is used for the text in Inkscape, but the package 'color.sty' is not loaded}%
    \renewcommand\color[2][]{}%
  }%
  \providecommand\transparent[1]{%
    \errmessage{(Inkscape) Transparency is used (non-zero) for the text in Inkscape, but the package 'transparent.sty' is not loaded}%
    \renewcommand\transparent[1]{}%
  }%
  \providecommand\rotatebox[2]{#2}%
  \newcommand*\fsize{\dimexpr\f@size pt\relax}%
  \newcommand*\lineheight[1]{\fontsize{\fsize}{#1\fsize}\selectfont}%
  \ifx\svgwidth\undefined%
    \setlength{\unitlength}{657.11755724bp}%
    \ifx\svgscale\undefined%
      \relax%
    \else%
      \setlength{\unitlength}{\unitlength * \real{\svgscale}}%
    \fi%
  \else%
    \setlength{\unitlength}{\svgwidth}%
  \fi%
  \global\let\svgwidth\undefined%
  \global\let\svgscale\undefined%
  \makeatother%
  \begin{picture}(1,0.59307084)%
    \lineheight{1}%
    \setlength\tabcolsep{0pt}%
    \put(0,0){\includegraphics[width=\unitlength,page=1]{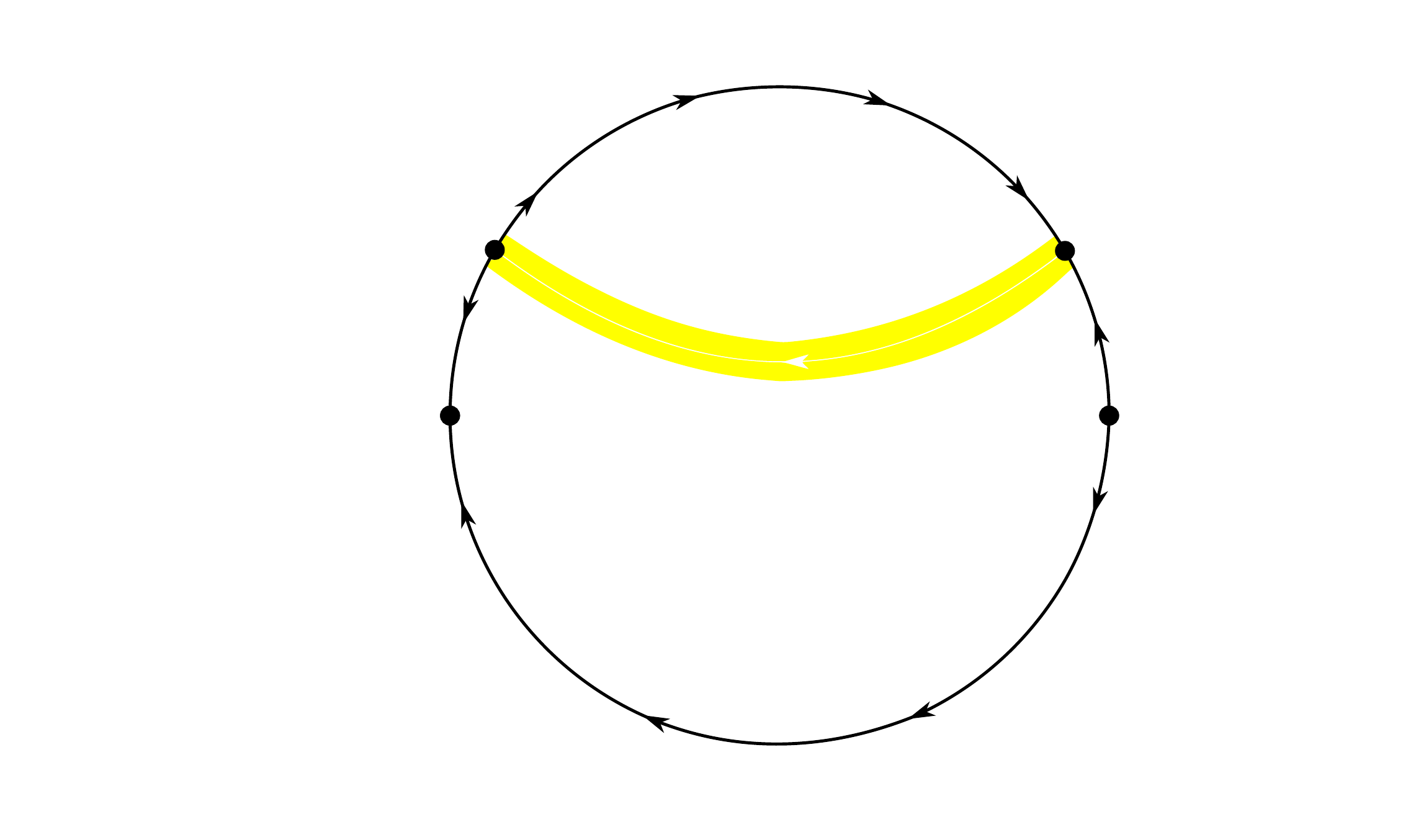}}%
    \put(0.79734652,0.29505391){\makebox(0,0)[lt]{\lineheight{1.25}\smash{\begin{tabular}[t]{l}$0$\end{tabular}}}}%
    \put(0,0){\includegraphics[width=\unitlength,page=2]{mult_pitchfork.pdf}}%
    \put(0.76795542,0.4193416){\makebox(0,0)[lt]{\lineheight{1.25}\smash{\begin{tabular}[t]{l}$\arctan{\frac{1}{\varphi}}$\end{tabular}}}}%
    \put(0.52688089,0.58136012){\makebox(0,0)[lt]{\lineheight{1.25}\smash{\begin{tabular}[t]{l}$\frac{\pi}{2}$\end{tabular}}}}%
    \put(-0.00115175,0.43893055){\makebox(0,0)[lt]{\lineheight{1.25}\smash{\begin{tabular}[t]{l}$\pi-\arctan{\frac{1}{\varphi}}$\end{tabular}}}}%
    \put(0.27034901,0.29691462){\makebox(0,0)[lt]{\lineheight{1.25}\smash{\begin{tabular}[t]{l}$\pi$\end{tabular}}}}%
    \put(0.48708879,0.00306143){\makebox(0,0)[lt]{\lineheight{1.25}\smash{\begin{tabular}[t]{l}$-\frac{\pi}{2}$\end{tabular}}}}%
    \put(0,0){\includegraphics[width=\unitlength,page=3]{mult_pitchfork.pdf}}%
  \end{picture}%
\endgroup%

         \caption{Degenerate pitchfork - Blow-up}
         \label{fig:mult_pitchfork}
     \end{subfigure}
     \end{center}
        \caption{
        The upper pictures (\ref{fig:transcritical}, \ref{fig:pitchfork}) are representative phase portraits where the fast-foliation (double-arrow black lines) and the reduced flow (single-arrow red lines) are shown. A neighbourhood of the singularity has been highlighted by a light blue disk. Figure \ref{fig:transcritical} is associated with the vector field $X_0 + \epsilon X_1$ given by \eqref{eq:x0_transcritical}, \eqref{eq:x1_transcritical}. While figure \ref{fig:pitchfork} is associated with the vector field $X_0 + \epsilon X_1$ given by \eqref{eq:x0_pitchfork}, \eqref{eq:x1_pitchfork}. The pictures below (\ref{fig:mult_transcritical}, \ref{fig:mult_pitchfork}) represent the blow-up of the singularities. In particular, we are showing the hemispheres corresponding to positive values of $\epsilon$. Along the (invariant) equatorial line the equilibria emerging from the desingularisation process are displayed together with the polar coordinate. The equilibria at $0$ and $\pi$ correspond with the fast-foliation, while the other equilibria can be traced back to the different branches of the critical variety. In figure \ref{fig:mult_transcritical}, we can see the blow-up of the degenerate transcritical singularity for the system displayed in fig.\ref{fig:transcritical}. The canard is given by the highlighted solution connecting the attracting equilibrium at $\pi + \arctan{1/2}$ to the repelling equilibrium at $\arctan{1/2}$. In figure \ref{fig:mult_pitchfork}, we can see the blow-up of the degenerate pitchfork singularity for the system displayed in in fig.\ref{fig:pitchfork}. Let us notice that in this case one spherical blow-up it is not enough to fully desingularise the system, indeed for a detailed local behaviour near the points $\pm \pi/2$ further blow-ups are necessary. At any rate, the blow-up we performed is enough to establish the connection (highlighted curve) between the attracting and repelling branches of the parabola, respectively at $\arctan{1/\varphi}$ and $\pi -\arctan{1/\varphi}$, where $\varphi$ is the golden ratio. 
        }
        \label{fig:degenerate_cases}
\end{figure}

\section{Discussion and perspectives}

In this paper, we studied the presence of singular canards for polynomial vector fields of the from $X=X_0 + \epsilon X_1$, when a singularity due to the intersection of several branches of the critical variety is present. Let us remark that our results could also be applied to smooth (not necessarily polynomial) vector fields by taking the zeroth and first \emph{jets} in $\epsilon$ and restricting the domain to a neighbourhood of the singularity. Also, one can extend the results to (smooth) vector fields on Riemannian manifolds, if there exists a local chart covering a neighbourhood of the singularity.

Let us recall that canard solutions act as a separatrix between generic behaviours. Therefore, knowing the conditions for canards allows us also to predict the behaviour of the system under a given perturbation.

Singularities arising from tangency with the fast-foliation, such as in the case of the fold, have been omitted. In principle, in the case of fold singularities the concept of stratification is not necessary, since there is only one critical curve. By comparison with the results in the literature \cite{Szmolyan2001}, one can check that the tangency condition we found, see lemma \ref{lm:condition}, works also in the case of the fold. Indeed, one would ask the perturbation to be tangent to the parabola at the fold point. However, some new aspects arise. Notice that, generically, in an $\epsilon$-neighbourhood of the fold, the vector field $X_0$ is of order $\epsilon$, since only one curve of equilibria passes through the singular point. Such a property allows coordinate transformations to absorb part of the perturbation terms, leading to potentially more general conditions on the perturbation. For such a reason, we omitted the treatment here and postpone the problem to future works. As a further development, we can consider those degenerate cases where the intersection number between two curves is greater than one. For example, when the intersection of two curves is not transversal and so the two curves share the tangent space at the intersection. Unfolding the degenerate problems we have discussed can also be interesting.

We conclude by briefly digressing on slow-fast discrete-time planar maps.

\subsection{Discrete planar maps}
\newcommand{\R}{\mathbb R}

The theory of slow-fast maps is considerably limited compared to the continuous-time counterpart. Nevertheless, in this section, we argue that the theory developed in the main part of this paper holds as well for singularly perturbed discrete-time maps. Particularly relevant for the coming digression are \cite{arcidiacono2019discretized,engel2019discretized}, which deal with pitchfork and transcritical singularities, respectively. For a recent development towards Discrete GSPT see \cite{jelbart2023discrete}. Moreover, \cite[Appendix A]{szmolyan2004relaxation} is particularly useful within the context of normal hyperbolicity of $2$-dimensional slow-fast maps.

Given the planar vector field \eqref{eq:problem}, we can consider the discrete map obtained via Euler discretisation, namely a map $P:q\mapsto\tilde q$ defined by
\begin{equation}
    \tilde q = q +  \left( X_0  + \epsilon  X_1 \right) \delta ,
\end{equation}
where $q \in \mathbb{R}^2$,  $\delta$ is the discretisation step, and abusing notation we re-use the notation $X_i$ also for the discrete maps. 

In this setting, the critical set is $ \mathcal C=\left\{ p\in\R^2\, |\, X_0(p)=0\right\}$, which geometrically coincides with its continuous-time counter part. In what follows, just as in the main text, we assume that $p_s\in\mathcal C$ is an isolated singular point where the branches of the critical set have pairwise intersection number one. The stratification arguments hold the same with the appropriate adaptation for the Whitney regularity condition, that is whenever the linear map $\text{D}P|_{\mathcal C, \epsilon=0}=\text{Id}+\delta\text{D}X_0$ has no multipliers on the unit circle. Naturally, for $\delta\neq0$ this coincides with the continuous-time regularity condition. In other words, the Whitney stratification of $\mathcal C$ is the same for the continuous and for the discrete-time problems. We, therefore, use the same notation $\tilde{\mathfrak X}$ and $\mathfrak X$ to denote the Whitney and the relaxed stratifications that we introduced in section \ref{sec:strat}.

Regarding geometric properties, one can verify that in the setting of \cite{jelbart2023discrete}, the one-dimensional strata of $\mathcal C$ are, as in the continuous-time case, normally hyperbolic. As shown in \cite{jelbart2023discrete}, many of the concepts of the continuous-time setting have discrete-time analogues. In particular, one keeps important objects such as the \emph{fast-foliation}, the \emph{layer map}, and the \emph{reduced map}. What is most relevant for our discussion, is that the reduced map in the discrete-time setting has the exact same geometric interpretation as for the continuous-time one.

Regarding singular canards, and completely analogous to definition \ref{def:sing_canard}, we shall say that, for a slow-fast discrete-time map, a singular canard is an orbit, of iterations of the reduced map, passing through the singular point with nonzero finite speed. Moreover, in turn, analogous to definition \ref{def:stratified_vf}, we shall say that a stratified map is a map that assigns to each stratum a smooth map. If we use the exact same notation $\rho$ for the projection along the fast-foliation, we see that the reduced map defined by $\rho\circ X_1$ is a stratified map.

So, we can now argue that as in proposition \ref{prop:singular_canards}, for discrete-time slow-fast maps a singular canard exists if $\rho\circ X_1$ is a stratified map on the (relaxed) stratification $\mathfrak X$, see also lemma \ref{lm:condition}.

\vspace{6pt} 



\appendix
\section{Projections and the reduced problem}\label{app:projection}

Let us recall here some key concept from GSPT \cite{wechselberger2020geometric}. The vector field $X_0$ defines a foliation of the phase-space that generically is transverse to the critical variety $\mathcal{C}$, this is the so-called \emph{fast-foliation}. The reduced problem, or reduced flow, of a singularly perturbed system is defined via the unique projection map, $\pi : T_{\mathcal{C}}\mathbb{R}^2 \to T\mathcal{C}$, which takes vectors in $\mathbb{R}^2$ with base point in $\mathcal{C}$ and projects them onto the tangent space of the critical variety along the fast-foliation generated by $X_0$. So, the reduced problem is defined by the vector field $\pi \circ X_1$ on (regular sections of) $\mathcal{C}$. As immediate consequence of the definition of the projection, we have that such map is not well-defined when the fast-foliation is tangent to the critical variety. Such a property can be used to define, for example, \emph{fold points and folded singularities}.

Let us notice that, for the class of problems under our consideration, the singularities arise from the geometrical properties of the critical variety, i.e., there is a point of (self-)intersection. So, in such cases, and in contrast to for example fold points, the projection is not defined because there is no well-defined tangent space of the critical variety at the intersection point. Moreover, from the fact that Whitney strata have maximal rank, we can exclude the possibility of a whole branch of the critical manifold being aligned with the fast-foliation.

\section{Blow-Up}\label{app:blowup}

Given a nilpotent equilibrium point of a vector field, the blow-up is a desingularisation process that transforms the singular point in a higher-dimensional manifold with the aim to retrieve `more' hyperbolicity. Such a technique has been widely studied and applied with success to many problems concerning singular perturbations \cite{Kojakhmetov2021, kuehn2016multiple, Ferragut2011, Krupa_2001, Maesschalck2015, SZMOLYAN2001419, Szmolyan2001,dumortier1996canard}.

Let us briefly describe the \emph{spherical blow-up} for the setting of this paper. First, we consider $\epsilon$ not as a parameter, but as a coordinate. So, our two-dimensional vector field, $X$, now becomes a three-dimensional vector field $\hat{X}$. The singular point is given by the coordinate of $p_s$ together with $\epsilon=0$, we call this nilpotent point $\hat{p}_s$. The spherical blow-up is given by the transformation $\phi : \mathbb{R}^{2+1} \to \mathbb{S}^2_{\hat{p}_s} \times \mathbb{R}_+$, where $\mathbb{S}^2_{\hat{p}_s}$ is the two-sphere centred at $\hat{p}_s$. The restriction of the blown-up system to $\mathbb{S}^2_{\hat{p}_s} \times \{0\}$ corresponds to the singular point. However, the transformation $\phi$ by itself is not enough to obtain more hyperbolicity on the sphere $\mathbb{S}^2_{\hat{p}_s} \times \{0\}$. In order to \emph{desingularise} the system it is necessary to perform a \emph{conformal transformation} such that the system on $\mathbb{S}^2_{\hat{p}_s} \times \{0\}$ is non-trivial, and in the best scenario fully hyperbolic.

\subsection{Blow-ups of examples of section \ref{sec:examples}}\label{sec:blowup_examples}

The first example leads to the equations
\begin{equation}
    \begin{aligned}
\dot x &= (y-x)(y+x)(y-x/2)(y+x/2) + \epsilon \\
\dot y &=   \epsilon/2 \\
\dot \epsilon &= 0
    \end{aligned}.
\end{equation}
The spherical blow-up is given by the following transformation: $x\to r \cos{\theta} \sin{\phi}$, $y\to r \sin{\theta } \sin{\phi}$, $\epsilon \to r^4 \cos{\phi}$, where the powers of $r$ have been chosen calibrating the powers in the $\dot x$ equation. The appropriate desingularisation is obtained dividing the blown-up vector field by $r^3$. At this point, by setting $r$ to zero we obtain the desingularised vector field on the blown-up sphere,
\begin{equation}\label{eq:bu_transcritical}
    \begin{aligned}
       &\dot \theta = \frac{1}{16} \Big[\sin \theta  (6 \cos (2 \theta )-5 (\cos (4 \theta )+1)) \sin ^3(\phi )+8 \cot \phi  (\cos \theta -2 \sin \theta )\Big] \\
       &\begin{split}
         \dot \phi =  \frac{\cos \phi}{6 \cos (2 \phi )+10}   \Big[\cos \theta  (-6 \cos (2 \theta )+5 \cos (4 \theta )+5) \sin ^4(\phi ) \\
        +8 \cos \phi  (\sin \theta +2 \cos \theta )\Big]   
       \end{split}        
    \end{aligned} .
\end{equation}
Setting $\phi=\pi/2$ in \eqref{eq:bu_transcritical}, one can check that the equatorial line is invariant, and the motion on the equator is given by
\begin{equation} \label{eq:equator_transcritical}
   \dot \theta = -\frac{1}{16} \sin \theta  (-6 \cos (2 \theta )+5 \cos (4 \theta )+5) .
\end{equation}
The equilibria of \eqref{eq:equator_transcritical} are $\theta=0, \pi,  \pm \pi/4 ,\pm 3/4 \pi, \pm \arctan{1/2}, \pm (\pi - \arctan{1/2})$. By setting $\theta=-\pi +\arctan{1/2}$ in \eqref{eq:bu_transcritical}, we obtain the connection between the points $\pm (\pi - \arctan{1/2})$ corresponding to the canard.

The second example is given by 
\begin{equation}
    \begin{aligned}
\dot x &= (x +  y/2) (x -  y/2) (y - x^2) + \epsilon \\
\dot y &=   \epsilon x \\
\dot \epsilon &= 0
    \end{aligned} .
\end{equation}
In this case, we choose a spherical blow-up where the powers of $r$ are calibrated to compensate the parabola's equation, i.e., $x\to r \cos{\theta} \sin{\phi}$, $y\to r^2 \sin{\theta} \sin{\phi}$, $\epsilon \to r^4 \cos{\phi}$. The desingularisation is given once again dividing by $r^3$. So, the vector field on the sphere at $r=0$ reads
\begin{equation}\label{eq:bu_pitchfork}
\begin{aligned}
    &\begin{split}
        \dot \theta = -&\frac{1}{2 \left(\cos (2 \theta )-8 \csc ^2(\phi )+5\right)}
            \Big[2 \sin{\theta} \cos ^4(\theta ) (5 \sin{\phi }+\sin (3 \phi )) \\
            &\hspace{2cm} -4 \cos ^2(\theta ) \left(\sin ^2(\theta ) \cos (2 \phi )+\cos{\phi}+4 \cos{\phi} \cot ^2(\phi )\right) \\
            &\hspace{2cm} +4 \sin{\theta} (\cos (2 \phi )+3) \cot{\phi} \csc ^2(\phi )-3 \sin ^2(2 \theta )  \Big]
    \end{split}\\
     &\begin{split}
         \dot \phi &=-\frac{16 \cos{\theta} \cos{\phi}}{-2 \cos (2 \theta ) \sin ^2(\phi )+5 \cos (2 \phi )+11}\Big[ \cos ^4(\theta ) \sin ^4(\phi )\\
         &\hspace{2cm} -\sin{\theta} \cos ^2(\theta ) \sin ^3(\phi )+\sin{\theta} \sin{\phi} \cos{\phi}+\cos{\phi} \Big]
     \end{split}
\end{aligned}
\end{equation}
Of course, the equator is invariant and the vector field on it is 
\begin{equation}\label{eq:equatorial_pitchfork}
   \dot \theta= -\frac{-3 \sin ^2(2 \theta )+8 \sin{\theta} \cos ^4(\theta )+4 \sin ^2(\theta ) \cos ^2(\theta )}{2 (\cos (2 \theta )-3)}.
\end{equation}
The equilibria of \eqref{eq:equatorial_pitchfork} are given by $\theta=0,\, \pi,\, \pm \pi/2,\, \arctan{1/\varphi},\, \pi -\arctan{1/\varphi}$, where $\varphi$ is the golden ratio.

For this example, it is not possible to obtain an analytical expression for the canard. However, one can check that \eqref{eq:bu_pitchfork} has a symmetry that enforces the presence of the canard. Let us consider the reflection with respect to the meridian passing trough $\theta=\pi/2$. In terms of coordinates, we first translate the system so that the axis $\theta=\pi/2$ is at $\theta=0$ and then we apply the reflection $R$, such that $R \theta \to -\theta$. Let $(\dot \theta, \dot \phi)$ be the translated system, then the action of the reflection on the vector field gives $R ( \dot \theta, \dot \phi) = ( \dot \theta, - \dot \phi)$. Such a symmetry implies that integral curves are symmetric with respect to reflection along the meridian passing through $\theta=\pi/2$. Consequently, the canard connection is established.


\bibliography{bibliography.bib}

\begin{thebibliography}{10}

\bibitem{arcidiacono2019discretized}
{\sc L.~Arcidiacono, M.~Engel, and C.~Kuehn}, {\em Discretized fast--slow
  systems near pitchfork singularities}, Journal of Difference Equations and
  Applications, 25 (2019), pp.~1024--1051.

\bibitem{Benoit1981}
{\sc E.~Benoît, J.~L. Callot, F.~Diener, and M.~Diener}, {\em Chasse au
  canard}, Collectanea Mathematica,  (1981).

\bibitem{carr1981applications}
{\sc J.~Carr}, {\em Applications of Centre Manifold Theory}, Applied
  Mathematical Sciences, Springer New York, 1981.

\bibitem{cushman2015global}
{\sc R.~Cushman and L.~Bates}, {\em Global Aspects of Classical Integrable
  Systems}, Springer Basel, 2015.

\bibitem{Maesschalck2015}
{\sc P.~De~Maesschalck}, {\em Planar canards with transcritical intersections},
  Acta Applicandae Mathematicae, 137 (2015), pp.~159--184.

\bibitem{dumortier1996canard}
{\sc F.~Dumortier and R.~Roussarie}, {\em Canard cycles and center manifolds},
  vol.~577, American Mathematical Soc., 1996.

\bibitem{Dumortier1996}
{\sc F.~A.~G. Dumortier and R.~Roussarie}, {\em Canard cycles and center
  manifolds}, 1996.

\bibitem{Eckhaus1983}
{\sc W.~Eckhaus}, {\em Relaxation oscillations including a standard chase on
  french ducks}, in Asymptotic Analysis II ---, Berlin, Heidelberg, 1983,
  Springer Berlin Heidelberg, pp.~449--497.

\bibitem{engel2019discretized}
{\sc M.~Engel and C.~Kuehn}, {\em Discretized fast-slow systems near
  transcritical singularities}, Nonlinearity, 32 (2019), p.~2365.

\bibitem{Fen1972}
{\sc N.~Fenichel}, {\em Persistence and smoothness of invariant manifolds for
  flows}, Indiana Univ. Math. J., 21 (1972), pp.~193--226.

\bibitem{FENICHEL197953}
\leavevmode\vrule height 2pt depth -1.6pt width 23pt, {\em Geometric singular
  perturbation theory for ordinary differential equations}, Journal of
  Differential Equations, 31 (1979), pp.~53--98.

\bibitem{Ferragut2011}
{\sc A.~Ferragut, X.~Jarque, and M.~Alvarez}, {\em A survey on the blow up
  technique}, International Journal of Bifurcation and Chaos, 21 (2011).

\bibitem{fulton2008algebraic}
{\sc W.~Fulton}, {\em Algebraic Curves: An Introduction to Algebraic Geometry},
  2008.

\bibitem{fulton2012intersection}
\leavevmode\vrule height 2pt depth -1.6pt width 23pt, {\em Intersection
  Theory}, Ergebnisse der Mathematik und ihrer Grenzgebiete, Springer New York,
  2012.

\bibitem{gibson1976topological}
{\sc C.~Gibson, K.~Wirthm{\"u}ller, A.~Du~Plessis, and E.~Looijenga}, {\em
  Topological Stability of Smooth Mappings}, Lecture Notes in Mathematics,
  Springer, 1976.

\bibitem{Golubitsky2006}
{\sc M.~Golubitsky and I.~Stewart}, {\em Nonlinear dynamics of networks: The
  groupoid formalism}, Bulletin of the American Mathematical Society, 43
  (2006).

\bibitem{GuckenheimerHolmes}
{\sc J.~Guckenheimer and P.~Holmes}, {\em Nonlinear Oscillations, Dynamical
  Systems, and Bifurcations of Vector Fields}, Applied Mathematical Sciences,
  Springer New York, 1983.

\bibitem{Hironaka1973}
{\sc H.~Hironaka}, {\em Subanalytic sets}, in Number theory, algebraic geometry
  and commutative algebra, in honor of {Y}asuo {A}kizuki, Kinokuniya, Tokyo,
  1973, pp.~453--493.

\bibitem{hirsch2006invariant}
{\sc M.~Hirsch, C.~Pugh, and M.~Shub}, {\em Invariant Manifolds}, Lecture Notes
  in Mathematics, Springer Berlin Heidelberg, 2006.

\bibitem{Kojakhmetov2021}
{\sc H.~{Jard\'on-Kojakhmetov} and C.~Kuehn}, {\em A survey on the blow-up
  method for fast-slow systems}, vol.~775, AMS Press, 2021, pp.~115--160.
\newblock IV Meeting Reuni{\'o}n de Matem{\'a}ticos Mexicanos en el Mundo ;
  Conference date: 10-06-2018 Through 15-06-2018.

\bibitem{jelbart2023discrete}
{\sc S.~Jelbart and C.~Kuehn}, {\em Discrete geometric singular perturbation
  theory}, Discrete and Continuous Dynamical Systems, 43 (2023), pp.~57--120.

\bibitem{Szmolyan2001}
{\sc M.~Krupa and P.~Szmolyan}, {\em Extending geometric singular perturbation
  theory to nonhyperbolic points---fold and canard points in two dimensions},
  Society for Industrial and Applied Mathematics, 33 (2001), pp.~286--314.

\bibitem{Krupa_2001}
{\sc M.~Krupa and P.~Szmolyan}, {\em Extending slow manifolds near
  transcritical and pitchfork singularities}, Nonlinearity, 14 (2001), p.~1473.

\bibitem{kuehn2016multiple}
{\sc C.~Kuehn}, {\em Multiple Time Scale Dynamics}, Applied Mathematical
  Sciences, Springer International Publishing, 2016.

\bibitem{SZMOLYAN2001419}
{\sc P.~Szmolyan and M.~Wechselberger}, {\em Canards in $\mathbb{R}^3$},
  Journal of Differential Equations, 177 (2001), pp.~419--453.

\bibitem{szmolyan2004relaxation}
{\sc P.~Szmolyan and M.~Wechselberger}, {\em Relaxation oscillations in r3},
  Journal of Differential Equations, 200 (2004), pp.~69--104.

\bibitem{Vanderbauwhede1989}
{\sc A.~Vanderbauwhede}, {\em Centre Manifolds, Normal Forms and Elementary
  Bifurcations}, Vieweg+Teubner Verlag, Wiesbaden, 1989, pp.~89--169.

\bibitem{wechselberger2020geometric}
{\sc M.~Wechselberger}, {\em Geometric Singular Perturbation Theory Beyond the
  Standard Form}, Frontiers in Applied Dynamical Systems: Reviews and
  Tutorials, Springer International Publishing, 2020.

\end{thebibliography}
\bibliographystyle{siam}

\end{document}